\newcommand{\Arr}{\ensuremath{\Rightarrow}}
\newcommand{\cateb}[1]{\ensuremath{\mathbbm{#1}}}
\newcommand{\arr}{\ensuremath{\rightarrow}}
\newcommand{\hook}{\ensuremath{\hookrightarrow}}
\newcommand{\rarr}{\ensuremath{\rightarrow}}
\newcommand{\slice}[2]{\ensuremath{#1/{\textstyle#2}}}
\newcommand{\oslice}[2]{\ensuremath{{\textstyle#1}/#2}}
\newcommand{\bbo}[1]{\ensuremath{\mathbbm{#1}}}
\newcommand{\fib}[3]{\ensuremath{#3:\bbo#1\rarr\bbo#2}}
\newtheorem{proposition}{Proposition}
\newtheorem{example}{Example}
\newtheorem{examples}{Examples}
\newtheorem{definition}{Definition}
\newtheorem{remark}{Remark}
\begin{document}
\title{{\bf A note on the slicing of fibrations}}
\date{}
\author{Ruggero Pagnan}
\maketitle
\begin{abstract}
We describe the construction of the slice fibration of a given one. 
\end{abstract}

\tableofcontents

\section{Introduction}
In this note we describe the construction of the slice fibration of a given one. Although maybe an already well known construction, we have not been able to find it in any of the more or less standard references for fibered category theory, hence the reason for explicitly considering it, also in the spirit of~\cite{MR780520}. 

\section{Categorical preliminaries}\label{prelim}
We here recall some fundamental category theoretic facts regarding ordinary categories and fibered categories, or fibrations, in the sense of~\cite{Grothendieck}, mainly to establish the terminology and the notations that will be most frequently used henceforth. We assume that the reader is already acquainted with the general basics of ordinary category theory and fibered category theory, but see~\cite{Streicher},~\cite{MR1712872},~\cite{MR1182992},~\cite{MR780520} and~\cite{MR1674451} to find the notions that we will not explicitly reintroduce in this note.

\subsection{Slice categories and functors between them}\label{onslicecats}
For \cateb{C} a category and $A$ any of its objects, the \emph{slice category of \cateb{C} over $A$}, usually denoted \slice{\cateb C}{A}, has objects given by morphisms $x:X\arr A$ of \cateb C, whereas a morphism of \slice{\cateb C}{A} say from $x:X\arr A$ to $y:Y\arr A$, is a moprhism $g:X\arr Y$ of \cateb C such that $yg=x$ in \cateb C or, more briefly, a commutative triangle
$$
\xymatrix{X\ar[dr]_x\ar[rr]^g&&Y\ar[dl]^y\\
&A}
$$ 
thus making explicit that \slice{\cateb C}{A} is a subcategory of $\cateb{C}^{\arr}$, that is the so-called category of morphisms of $\cateb C$. 
\begin{proposition}\label{fewprop}
For \cateb C a category, the following facts hold:
\begin{enumerate}[(i)]
\item\label{i} If \cateb C is equipped with a terminal object $1$, then $\slice{\cateb C}{1}\equiv\cateb C$.

\item For every morphism $f:A\arr B$ of \cateb C,   $\slice{(\slice{\cateb C}{B})}{f}\equiv\slice{\cateb C}{A}$.
\item\label{iii} For every morphism $f:A\arr B$ of \cateb C, there is the  functor 
$$\Sigma_f:\slice{\cateb C}{A}\arr\slice{\cateb C}{B}$$
identified by the assignment
$$
\xymatrix{X\ar[dr]_x\ar[rr]^g&&Y\ar[dl]^y\\
&A}\qquad\mapsto\qquad
\xymatrix{X\ar[dr]_{fx}\ar[rr]^g&&Y\ar[dl]^{fy}\\
&B}
$$
\item\label{iv} For every object $A$ of a category \cateb C, there is the ``obvious'' forgetful functor $\Sigma_A:\slice{\cateb C}{A}\arr\cateb C$. 
\item For every morphism $f:A\arr B$, if \cateb C is equipped with pullbacks along $f$, then there is the adjoint situation
$$
\xymatrix{\slice{\cateb C}{A}\ar@{}[rr]|{\perp}\ar@/^/[rr]^{\Sigma_f}&&\slice{\cateb C}{B}\ar@/^/[ll]^{f^*}}
$$
\item For every object $A$ of \cateb C, the category \slice{\cateb C}{A} is equipped with a canonical terminal object, that is $1_A$ 
\end{enumerate}
\end{proposition}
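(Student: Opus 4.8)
The six items separate into the routine identifications and one genuinely substantive adjunction, namely (v); I would dispatch the former quickly and concentrate on the latter. For (i), since $1$ is terminal there is exactly one arrow $X\arr 1$ for each object $X$, so the objects of $\slice{\cateb C}{1}$ are in bijection with those of $\cateb C$, and the triangle condition $yg=x$ over $1$ holds automatically because both sides are the unique arrow to $1$; the resulting identity-on-underlying-data assignment is therefore an isomorphism of categories. For (ii), an object of $\slice{(\slice{\cateb C}{B})}{f}$ is a morphism of $\slice{\cateb C}{B}$ with codomain $f$, that is an arrow $g:X\arr A$ together with $x:X\arr B$ subject to $fg=x$; since $x$ is recovered as $fg$, this is precisely the datum of the object $g:X\arr A$ of $\slice{\cateb C}{A}$, and the same bookkeeping on morphisms yields the isomorphism.

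For (iii) and (iv) the assignments leave the underlying arrows of $\cateb C$ unchanged, so preservation of identities and composites is inherited directly from $\cateb C$; the only thing to check is commutativity of the relevant triangles, which for (iii) is the one-line computation $(fy)g=f(yg)=fx$. For (vi), given any $x:X\arr A$, a morphism $x\arr 1_A$ in $\slice{\cateb C}{A}$ is an arrow $g:X\arr A$ with $1_A g=x$, forcing $g=x$; hence $x$ is the unique such morphism and $1_A$ is terminal.

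The substantive content is (v). The plan is to define the right adjoint $f^*\colon\slice{\cateb C}{B}\arr\slice{\cateb C}{A}$ by pullback along $f$, sending $y:Y\arr B$ to the projection $f^*y\colon A\times_B Y\arr A$ and acting on morphisms through the universal property, and then to exhibit a natural bijection
$$
\slice{\cateb C}{B}(\Sigma_f x,\,y)\;\cong\;\slice{\cateb C}{A}(x,\,f^*y)
$$
for $x:X\arr A$ and $y:Y\arr B$. A morphism on the left is an arrow $h:X\arr Y$ with $yh=fx$, which is exactly a cone with vertex $X$ over the cospan $A\arr B\leftarrow Y$ whose legs are $x$ and $h$; by the universal property of the pullback $A\times_B Y$ this corresponds uniquely to an arrow $X\arr A\times_B Y$ commuting with both projections, i.e. to a morphism $x\arr f^*y$ in $\slice{\cateb C}{A}$.

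The main obstacle is verifying that this correspondence is natural in both $x$ and $y$: naturality amounts to chasing the pullback universal property through precomposition on the $X$-side and postcomposition on the $Y$-side, the latter also requiring the functorial action of $f^*$ on morphisms. This is where care is needed, though it remains routine once the universal property is invoked. Alternatively I would package the adjunction by its unit and counit — the unit being the comparison arrow $x\arr f^*\Sigma_f x$ induced into the pullback, the counit the top projection of the defining pullback square — and reduce the triangle identities to the uniqueness clause of the universal property.
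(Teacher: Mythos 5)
Your proof is correct and takes exactly the route the paper intends: the paper's own proof is the single word ``Straightforward,'' and you supply the standard details it compresses, namely the routine identifications for (i)--(iv) and (vi) and the pullback hom-set bijection (equivalently, the unit/counit description) establishing $\Sigma_f\dashv f^*$ in (v). Nothing further is needed.
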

\begin{proof}
Straightforward.
\end{proof}

\begin{remark}\label{riferimento}
If \cateb C is equipped with a terminal object $1$, then the functor $\Sigma_A$ in~\eqref{iv} arises as a special case of \eqref{iii}, if, by abusing notation, $A$ is $A:A\arr 1$.
\end{remark}

\subsection{Pointed categories}
Suitably, taking slices of categories amounts to a universal construction. A \emph{pointed category} is a pair $(A,\cateb C)$ where \cateb C is a non-empty category and $A$ is an object of \cateb C, that is a functor $A:\cateb 1\arr\cateb C$. A category \cateb C equipped with a (chosen) terminal object $1$ identifies a pointed category $(1,\cateb C)$, which we call \emph{terminally pointed} to be more specific. A \emph{morphism of pointed categories}, say from $(A, \cateb C)$ to $(B, \cateb D)$ is a pair $(F, \alpha)$ with $F:\cateb C\arr\cateb D$ a functor and $\alpha:FA\arr B$ a morphism of \cateb D, that is a natural transformation $\alpha:F\circ A\Arr B$. 
\begin{remark}
A morphism of pointed categories whose codomain is a terminally pointed category, say $(F,\alpha):(A,\cateb C)\arr(1,\cateb D)$ amounts to a functor $F:\cateb C\arr\cateb D$ together with the unique morphism $FA:FA\arr 1$ in \cateb D, which essentially is the object $FA$ of \cateb D. 
\end{remark}
A pair of composable morphisms of pointed categories compose in accordance with the following law of composition:
$$
\xymatrix{(A,\cateb D)\ar@/_1pc/[rr]_{(GF,\beta\circ(G\alpha))}\ar[r]^{(F,\alpha)}&(B,\cateb C)\ar[r]^{(G,\beta)}&(C,\cateb B)}
$$
Moreover, for every pointed category $(A,\cateb D)$, $(1_{\cateb D},1_A):(A,\cateb D)\arr(A,\cateb D)$ is an identity with respect to the previous law of composition. Thus, pointed categories and morphisms between them identify a category ${\bf PtdCat}$; in turn, terminally pointed categories identify a full subcategory $\iota:{\bf TPtdCat}\hook{\bf PtdCat}$.
\vfill
For a functor $F:\cateb C\arr\cateb D$ and an object $A$ of \cateb C there is the \emph{slice functor} 
$\slice F A:\slice{\cateb C}{A}\arr\slice{\cateb D}{FA}$ whose action on objects and morphisms is completely identified by the assignment 
$$
\xymatrix{X\ar[dr]_x\ar[rr]^g&&Y\ar[dl]^y\\
&A}\qquad\mapsto\qquad
\xymatrix{FX\ar[dr]_{Fx}\ar[rr]^{Fg}&&FY\ar[dl]^{Fy}\\
&FA}
$$
As a consequence of this, there is the functor $G:{\bf PtdCat}\arr{\bf TPtdCat}$ completely identified by the assignment
$$
\xymatrix{(A,\cateb C)\ar[d]_{(F,\alpha)}\\
(B,\cateb D)}
\quad\mapsto\quad
\xymatrix{(1_A,\slice{\cateb C}{A})\ar[d]^{(\Sigma_{\alpha} \circ\slice{\displaystyle F}{A},\alpha)}\\
(1_B,\slice{\cateb D}{B})}
$$
where
$$
\xymatrix{\slice{\cateb C}{A}\ar@/^1pc/[rr]^{\Sigma_{\alpha}\circ\slice{\displaystyle F}{A}}\ar[r]_{\slice{\displaystyle F}{A}}&\slice{\cateb D}{FA}\ar[r]_{\Sigma_{\alpha}}&\slice{\cateb D}{B}}
$$
\begin{proposition}\label{firstobs}
There is an adjoint situation
$$
\xymatrix{{\bf TPtdCat}\ar@{^(->}[rr]_{\iota}^{\top}&&{\bf PtdCat}\ar@/_1pc/[ll]_G}
$$
\end{proposition}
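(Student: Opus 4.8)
The plan is to show that $\iota\dashv G$, that is, that $G$ is \emph{right} adjoint to the fully faithful inclusion $\iota$, exhibiting ${\bf TPtdCat}$ as a coreflective subcategory of ${\bf PtdCat}$. First I would pin down the direction: the reverse adjunction $G\dashv\iota$ must fail, because by the Remark characterising morphisms into a terminally pointed category such a morphism is essentially just an underlying functor, so $G\dashv\iota$ would demand a natural bijection between functors $\slice{\cateb C}{A}\arr\cateb D$ and functors $\cateb C\arr\cateb D$, which plainly does not hold. Since $\iota$ is fully faithful, it then suffices to produce a counit $\epsilon$ whose components are universal arrows; the triangle identities and the naturality of the induced hom-bijection are forced thereafter.

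For the counit I would set, for each pointed category $(A,\cateb C)$,
$$\epsilon_{(A,\cateb C)}=(\Sigma_A,1_A):(1_A,\slice{\cateb C}{A})\arr(A,\cateb C),$$
where $\Sigma_A:\slice{\cateb C}{A}\arr\cateb C$ is the forgetful functor of Proposition~\ref{fewprop}\eqref{iv}, with the identity $1_A:A\arr A$ as morphism component (legitimate since $\Sigma_A(1_A)=A$). Checking that $\epsilon$ is natural in $(A,\cateb C)$ amounts, after unwinding the composition law for morphisms of pointed categories against the given description of $G$ on morphisms, to the functoriality relation $\Sigma_B\circ\Sigma_\alpha=\Sigma_{FA}$ for the $\Sigma$-functors together with the compatibility $\Sigma_{FA}\circ\slice FA=F\circ\Sigma_A$ between the forgetful functors and the slice functor, for a morphism $(F,\alpha):(A,\cateb C)\arr(B,\cateb D)$; both identities are routine consequences of the definitions.

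The substance of the argument is the universal property of $\epsilon_{(A,\cateb C)}$: given any terminally pointed $(1,\cateb D)$ and any $(F,\alpha):(1,\cateb D)\arr(A,\cateb C)$ in ${\bf PtdCat}$, I must produce a \emph{unique} $(H,\beta):(1,\cateb D)\arr(1_A,\slice{\cateb C}{A})$ with $\epsilon_{(A,\cateb C)}\circ(H,\beta)=(F,\alpha)$. The candidate $H$ sends an object $D$ of \cateb D to the object $\alpha\circ F(!_D):FD\arr A$ of \slice{\cateb C}{A}, where $!_D:D\arr 1$ is the unique arrow to the terminal object, and sends $d:D\arr D'$ to $Fd$; the component $\beta$ is the unique arrow into the canonical terminal object $1_A$. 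That $Fd$ is genuinely a morphism of \slice{\cateb C}{A} and that $H$ is functorial both follow from the equation $!_{D'}\circ d=!_D$, while uniqueness follows because the condition $\epsilon\circ(H,\beta)=(F,\alpha)$ forces $\Sigma_A\circ H=F$ and the structure map of $H1$ to be $\alpha$, whence $HD=\alpha\circ F(!_D)$ is determined. The main obstacle, and the only place where the hypothesis genuinely bites, is precisely this construction: the existence and uniqueness of $H$ rest entirely on \cateb D possessing the terminal object $1$, which is exactly what ``terminally pointed'' supplies.

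Finally I would record that the unit is invertible, since $G\iota(1,\cateb D)=(1_1,\slice{\cateb D}{1})$ is canonically isomorphic to $(1,\cateb D)$ by Proposition~\ref{fewprop}\eqref{i}, confirming that $\iota$ is a coreflective embedding. The naturality of the bijection $\mathrm{Hom}((1,\cateb D),G(A,\cateb C))\cong\mathrm{Hom}(\iota(1,\cateb D),(A,\cateb C))$ in both variables is then a routine diagram chase against the composition law, completing the verification that $\iota\dashv G$.
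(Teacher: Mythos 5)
Your proposal is correct and takes essentially the same route as the paper: you exhibit $(\Sigma_A,1_A):(1_A,\slice{\cateb C}{A})\arr(A,\cateb C)$ as the universal arrow (counit) from $\iota$ to $(A,\cateb C)$, and your induced functor $H$, given on objects by $D\mapsto\alpha\circ F(!_D)$, is exactly the paper's $\overline F=\Sigma_{\alpha}\circ\slice{\displaystyle F}{1}$ written out objectwise, with your $\beta$ being the paper's $\overline\alpha$, the unique morphism to the terminal object $1_A$ of $\slice{\cateb C}{A}$. Your additional verifications (naturality of the counit, uniqueness via $\Sigma_A\circ H=F$, invertibility of the unit) merely make explicit what the paper leaves implicit in asserting the universal property.
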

\begin{proof}
For every pointed category $(A,\cateb C)$, the assignment $(A,\cateb C)\mapsto(1_A,\slice{\cateb C}{A})$ extends to the previously identified functor $G$ because 
$$
(\Sigma_A,1_A):(1_A,\slice{\cateb C}{A})\arr(A,\cateb C)
$$ 
is universal from $\iota$ to $(A,\cateb C)$. In fact , for every terminally pointed category $(1,\cateb D)$ and moprhism $(F,\alpha):(1,\cateb D)\arr(A,\cateb C)$, there is a unique morphism of (terminally) pointed categories $(\overline F,\overline\alpha):(1,\cateb D)\arr(1_A,\slice{\cateb C}{A})$ making the diagram
$$
\xymatrix{(1_A,\slice{\cateb C}{A})\ar[rr]^{(\Sigma_A,1_A)}&&(A,\cateb C)\\
(1,\cateb D)\ar[u]^{(\overline F,\overline\alpha)}\ar@/_/[urr]_{(F,\alpha)}}
$$
commute in {\bf PtdCat},
where
$$
\overline F=\Sigma_{\alpha}\circ\slice{\displaystyle F}{1}:\xymatrix{\cateb D\ar[r]^(.45){\slice{\displaystyle F}{1}}&\slice{\cateb C}{F1}\ar[r]^{\Sigma_{\alpha}}&\slice{\cateb C}{A}}
$$ 
and $\overline\alpha = \alpha$ as the unique morphism from $\alpha$ to $1_A$ in \slice{\cateb{C}}{A}. 
\end{proof}

\subsection{Fundamentals of fibered category theory}  
Let \fib{X}{B}{P} be a functor and let $f:X\arr Y$ be a morphism of \cateb X. The morphism $f$ is $P$-\emph{cartesian} if for every morphism $v:K\arr PX$ of \cateb B, for every morphism $g:Z\arr Y$ of \cateb X such that $Pg=Pf\circ v$, there exists a unique morphism $h:Z\arr X$ such that $f\circ h=g$ and $Ph=v$, as in the diagram
$$
\xymatrix{Z\ar@/^/[drr]^g\ar@{-->}[dr]_h\\
&X\ar[r]_f&Y\\
K\ar[r]_{v}&PX\ar[r]_{Pf}&PY}
$$
$P$ is a \emph{fibered category} or a \emph{(Grothendieck) fibration} if for every object $Y$ of \cateb X, for every morphism $u:I\arr PY$ of \cateb B, there exists a $P$-cartesian morphism $f:X\arr Y$ such that $Pf=u$. If this is the case, then $f$ is also referred to as a $P$-\emph{cartesian lifting} of $Y$ along $u$, and $P$ is also said to have \emph{enough cartesian liftings}. For \fib{X}{B}{P} a fibration, a morphism $f:X\arr Y$ of \cateb X is $P$-\emph{vertical} if $Pf=1_{PX}$; it is immediate to verify that cartesian liftings in a fibration a determined up to a unique vertical isomorphism. For every object $I$ of \cateb B, the $P$-vertical morphisms whose image is $1_I$ identify a subcategory $P_I\hook\cateb X$ which is the \emph{fiber category of $P$ over $I$}; actually, $P_I$ can be obtained as in the change of base situation
$$
\xymatrix{P_I\ar[d]\ar[rr]&&\cateb X\ar[d]^P\\
\cateb 1\ar[rr]_I&&\cateb B}
$$
The category which is the domain of a fibration is referred to as its \emph{total category}; the category which is the codomain of a fibration is referred to as its \emph{base category}.
\begin{example}
For \cateb B a category, the codomain functor ${\bf cod}_{\cateb B}:\cateb{B}^{\arr}\arr\cateb B$ is a fibration if and only if \cateb B has pullbacks. 
\end{example}
\begin{example}
For \cateb B a category, the domain functor ${\bf dom}_{\cateb B}:\cateb{B}^{\arr}\arr\cateb B$ is a fibration. For every object $y:Y\arr J$ of $\cateb{B}^{\arr}$, for every morphism $f:X\arr Y$ of \cateb B, a cartesian lifting of $y$  along $f$ is 
$$
\xymatrix{X\ar[d]_{yf}\ar[r]^f&Y\ar[d]^y\\
J\ar[r]_{1_J}&J}
$$
and if $I$ is an object of \cateb B, then the fiber category of ${\bf dom}_{\cateb B}$ over $I$ is the opslice category \oslice{I}{\cateb B}.
\end{example}
\begin{example}
For \cateb X a category, the functor $\pi_1:\cateb B\times\cateb X\arr\cateb B$ is a fibration. Every fiber category of $\pi_1$ is equivalent to \cateb X.
\end{example}
\begin{example}\label{identity}
For every category \cateb B, the identity functor $1_{\cateb B}:\cateb B\arr\cateb B$ is a fibration. Every morphism of \cateb B is $1_{\cateb B}$-cartesian. Every fiber category of $1_{\cateb B}$ is equivalent to the terminal category \cateb 1.
\end{example}
\begin{example}
For every category \cateb C, the functor $\cateb C:\cateb C\arr\cateb 1$ is a fibration. \cateb C-cartesian morphisms are exactly the isomorphisms of \cateb C. 
\end{example}
\begin{example}\label{six}
Let \fib{X}{B}{P} be a fibration. The $P$-vertical morphisms identify a full subcategory $V(P)\hook\cateb{X}^{\arr}$. The codomain functor ${\bf cod}_P:V(P)\arr\cateb X$ is a fibration if and only if $P$ has fibered pullbacks; that is, every fiber category of $P$ has pullbacks which are stable by $P$-reindexing.
\end{example}

\subsection{Fibered functors and fibered adjuctions}
Let \fib{X}{B}{P} and \fib{Y}{B}{Q} be fibrations. A \emph{fibered functor} from $P$ to $Q$ is a functor $F:\cateb X\arr\cateb Y$ mapping $P$-cartesian morphisms to $Q$-cartesian morphisms and such that $Q\circ F=P$. When confusion is not likely to arise, such a fibered functor will be also more briefly written $F:P\arr_{\cateb B}Q$. For a pair of parallel fibered functors $F, G:P\arr_{\cateb B}Q$, a \emph{fibered natural transformation} from $F$ to $G$ is a natural transformation from $F$ to $G$ with $Q$-vertical components.
Fibrations with base category \cateb B, fibered functors and fibered natural transformations form a $2$-category ${\bf Fib}(\cateb B)$. The fibration $1_{\cateb B}$ (see example~\ref{identity}) is a terminal object in ${\bf Fib}(\cateb B)$.
For fibrations \fib XBP and \fib YBQ, a {\em fibered adjunction} between $P$ and $Q$ in {\bf Fib}(\bbo B) is an adjunction
$$
\xymatrix{\bbo X\ar@/^/[rr]^F\ar@{}[rr]|{\perp}&&\bbo Y\ar@/^/[ll]^G}
$$ 
with $P$-vertical unit, $Q$-vertical counit and $F:P\rarr_{\cateb B} Q$, $G:Q\rarr_{\cateb B} P$ fibered functors.

\section{Slice fibrations}
In this section we describe the construction of the slice fibration of a given one; its fiber categories turn out to be the slice categories of the fibers of the original fibration. Moreover, the construction at issue is suitably universal.

\subsection{Pointed fibrations}
\begin{definition}
Let \cateb B be a category. A \emph{pointed fibration (over \cateb B)} is a pair $(p,P)$ where \fib XBP is a fibration and $p:1_{\cateb B}\arr_{\cateb B} P$ is a fibered functor. A \emph{terminally pointed fibration (over \cateb B)} is a pointed fibration $(p,\fib XBP)$ where $p$ is fibered right adjoint to $P:P\arr_{\cateb B}1_{\cateb B}$. The first component of a pointed fibration $(p,P)$ will be also referred to as its \emph{point} and, when confusion is not likely to arise, we will  also say that a fibration \fib XBP is pointed, without explicitly providing the pertinent point implicitly referred to.
\end{definition}

\begin{remark}\label{firstpointed}
If $(p, \fib XBP)$ is a pointed fibration over \cateb B, then for every morphism $u:I\arr J$ of \cateb B, the morphism $pu:pI\arr pJ$ of \cateb X is $P$-cartesian, because every morphism of \cateb B is $1_{\cateb B}$-cartesian and $p$ is required to be fibered.
\end{remark}

\begin{remark}
A terminally pointed fibration is nothing but a fibration equipped with fibered terminal objects; thus, a terminally pointed fibration will be henceforth also written as a pair $(\top_{\bullet},P)$.
\end{remark}

\begin{definition}\label{ptdmorphism}
A \emph{morphism of pointed fibrations (over \cateb B)}, say from $(p,P)$ to $(q, Q)$ is a pair $(F, \alpha)$ where $F:P\arr_{\cateb B}Q$ is a fibered functor
and $\alpha:F\circ p\Arr q$ is a natural transformation with $Q$-vertical components. 
Morphisms of pointed fibrations over \cateb B compose like this:
$$
\xymatrix{(p,P)\ar@/_1pc/[rr]_{(GF,\beta\circ G\alpha)}\ar[r]^{(F,\alpha)}&(q, Q)\ar[r]^{(G,\beta)}&(r,R)}
$$
For a pointed fibration $(p,\fib XBP)$, $(1_{\cateb X}, 1_p):(p,P)\arr(p,P)$ is an identity with respect to the previous law of composition. 
\end{definition}
\begin{remark}
As a consequence of remark~\ref{firstpointed}, if $(F,\alpha):(p,P)\arr(q, Q)$ is a morphism of pointed fibrations over \cateb B, then for every morphism $u:I\arr J$ of \cateb B, the commutative diagram
$$
\xymatrix{FpI\ar[d]_{\alpha_I}\ar[rr]^{Fpu}&&FpJ\ar[d]^{\alpha_J}\\
qI\ar[rr]_{qu}&&qJ}
$$
is a pullback in the total category of $Q$.
\end{remark}

\begin{remark}
For a parallel pair of morphisms of pointed fibrations over \cateb B, say $(F,\alpha), (G,\beta):(p,\fib XBP)\arr(q,\fib YBQ)$, a $2$-cell $\gamma:(F, \alpha)\Arr(G, \beta)$ is a fibered natural transformation $\gamma:F\Arr G$ such that $\beta\circ \gamma p=\alpha$, that is: for every object $I$ of \cateb B $\beta_I\circ\gamma_{pI}=\alpha_I$.
\end{remark}
Forgetting $2$-cells, pointed fibrations over \cateb B as objects and the morphisms between them
identify a category ${\bf PtdFib}(\cateb B)$. Clearly, the terminally pointed fibrations over \cateb B identify a full subcategory ${\bf TPtdFib}(\cateb B)\hook{\bf PtdFib}(\cateb B)$.

\begin{proposition}
Pointed fibrations are stable under change of base: for a pointed fibration $(p,\fib XBP)$ and a functor $F:\cateb C\arr\cateb B$, the fibration $P_F$ arising by change of base of $P$ along $F$ is a pointed. Similarly, terminally pointed fibrations are stable under change of base.
\end{proposition}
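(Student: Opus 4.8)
The plan is to construct an explicit point for $P_F$ and then to obtain the terminally pointed case by transporting the defining fibered adjunction along the change of base.

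Recall first the change-of-base construction: the total category $\cateb X_F$ of $P_F$ is the pullback of $P$ along $F$ in $\mathbf{Cat}$, with objects the pairs $(C,X)$ such that $FC=PX$, with morphisms $(C,X)\arr(C',X')$ the pairs $(c,x)$ such that $Fc=Px$, and with $P_F$ and $\overline F:\cateb X_F\arr\cateb X$ the two projections. The standard fact I shall use is that $(c,x)$ is $P_F$-cartesian precisely when $x$ is $P$-cartesian. Since $p$ is a section of $P$, one has $P\circ(p\circ F)=F=F\circ 1_{\cateb C}$, so the universal property of the pullback yields a unique functor $p_F:\cateb C\arr\cateb X_F$ with $P_F\circ p_F=1_{\cateb C}$ and $\overline F\circ p_F=p\circ F$; explicitly $p_F(C)=(C,pFC)$ and $p_F(c)=(c,pFc)$. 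The equation $P_F\circ p_F=1_{\cateb C}$ says exactly that $p_F$ is a functor over \cateb C, hence a candidate point $p_F:1_{\cateb C}\arr_{\cateb C}P_F$.

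The only substantive step is to verify that $p_F$ is fibered. By Example~\ref{identity} every morphism $c$ of \cateb C is $1_{\cateb C}$-cartesian, so I must check that $p_F(c)=(c,pFc)$ is $P_F$-cartesian; by the characterization above this reduces to showing that $pFc$ is $P$-cartesian. But $Fc$ is a morphism of \cateb B and $p$ is a fibered functor $1_{\cateb B}\arr_{\cateb B}P$, so $pFc$ is $P$-cartesian --- this is precisely Remark~\ref{firstpointed}. Hence $(p_F,P_F)$ is a pointed fibration over \cateb C.

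For the terminally pointed case I would use that change of base along $F$ is a $2$-functor $\mathbf{Fib}(\cateb B)\arr\mathbf{Fib}(\cateb C)$ carrying $1_{\cateb B}$ to $1_{\cateb C}$, $P$ to $P_F$, and the fibered functor $p$ to the point $p_F$ just constructed. As a $2$-functor it preserves adjunctions, so the fibered adjunction $P\dashv p$ witnessing terminal pointedness is carried to a fibered adjunction $P_F\dashv p_F$, the unit and counit remaining vertical because change of base sends $P$-vertical natural transformations to $P_F$-vertical ones. Equivalently and more concretely, each fiber $(P_F)_C$ is isomorphic to $P_{FC}$, under which isomorphism the fibered terminal object $pFC$ of $P_{FC}$ corresponds to $p_F(C)$, and stability of these terminal objects under reindexing in $P_F$ follows from their stability in $P$. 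The computations for the pointed case are immediate once Remark~\ref{firstpointed} is invoked, so the only place demanding care is the terminally pointed case: one must make the identifications $(1_{\cateb B})_F\cong 1_{\cateb C}$ and $(p)_F\cong p_F$ precise enough that the transported adjunction is genuinely the fibered adjunction $P_F\dashv p_F$ and not merely an equivalent one. This bookkeeping is where I expect the main, though routine, effort to lie.
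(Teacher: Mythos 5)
Your proof is correct and takes essentially the same route as the paper: the point $p_F$ is uniquely induced by the universal property of the pullback defining $P_F$, with the identical explicit formulas $C\mapsto(C,pFC)$ and $c\mapsto(c,pFc)$. You additionally spell out the fiberedness check for $p_F$ (via Remark~\ref{firstpointed} and the fact that $(c,x)$ is $P_F$-cartesian iff $x$ is $P$-cartesian) and the terminally pointed case, both of which the paper leaves implicit in its one-line ``look at the diagram'' proof and the word ``similarly''.
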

\begin{proof}
It suffices to have a look at the diagram
$$
\xymatrix{&\cateb C\ar@/^/[ddl]|(.45){\hole}^(.65){1_{\cateb C}}\ar@{-->}[dl]_{p_F}\ar[rr]^F&&\cateb B\ar@/^/[ddl]^{1_{\cateb B}}\ar[dl]_p\\
\cateb Y\ar[d]_{P_F}\ar[rr]^{\overline F}&&\cateb X\ar[d]_P\\
\cateb C\ar[rr]_F&&\cateb B}
$$
in which all the quadrilaterals are pullbacks and the point $p_F$ is uniquely induced. Explicitly, the action of $p_F$ on an object $X$ of \cateb C is identified by the assignment $X\mapsto (X, pFX)$, which extends to the morphisms of \cateb C in accordance with the assignment 
$$f:X\arr Y\quad\mapsto\quad (f,pFf):(X,pFX)\arr(Y,pFY)$$
\end{proof}

\subsection{Slice fibrations}

\begin{definition}\label{fundef}
Let $(p,\fib XBP)$ be a pointed fibration where $P$ is equipped with fibered pullbacks. The \emph{slice fibration of $P$ over $p$} is the fibration $\slice P p:\cateb Y\arr\cateb B$ obtained in the change of base situation
\begin{eqnarray}\label{slicefib}
\xymatrix{\cateb Y\ar[d]_{\slice P p}\ar[rr]^{\overline p}&&V(P)\ar[d]^{\bf{cod}_P}\\
\cateb B\ar[rr]_p&&\cateb X}
\end{eqnarray}
\end{definition}

\begin{remark}
For a fibration \fib XBP with fibered pullbacks the composite $P\circ{\bf cod}_P$ is a fibration which is usually denoted $P^{\arr}$ and referred to as the \emph{fibration of morphisms} associated to $P$; the fibration ${\bf cod}_P$ (see example~\eqref{six}) is a fibered functor ${\bf cod}_P:P^{\arr}\arr_{\cateb B} P$.
\end{remark}
With reference to diagram~\eqref{slicefib}, \cateb Y is the category whose objects are pairs $(I, x:X\arr pI)$ where $I$ is an object of \cateb B and $x:X\arr pI$ is a $P$-vertical morphism over $I$; a morphism of \cateb Y, say from $(I, x:X\arr pI)$ to $(J,y:Y\arr pJ)$ is a pair $(u,f)$ with $u:I\arr J$ a morphism of \cateb B and $f:X\arr Y$ a morphism of \cateb X, making the diagram
\begin{eqnarray}\label{codcart}
\xymatrix{X\ar[d]_x\ar[rr]^f&&Y\ar[d]^y\\
pI\ar[rr]_{pu}&&pJ}
\end{eqnarray}
commute in \cateb X. The fibration $\slice P p$ acts on both objects and morphisms of $\cateb Y$ by projecting their first component to \cateb B. A $\slice P p$-cartesian morphism over a morphism $u:I\arr J$ of \cateb B is a ${\bf cod}_P$-cartesian morphism over $pu$, that is a commutative diagram such as~\eqref{codcart} which is a pullback. 

\begin{remark}
For every object $I$ of \cateb B, the fiber category of $\slice P p$ over $I$ is, essentially, the slice category $\slice{P_I}{pI}$, that is $(\slice P p)_I\equiv\slice{P_I}{pI}$.
\end{remark}

\begin{proposition}
A slice fibration $\slice P p:\cateb Y\arr\cateb B$ is a terminally pointed fibration, namely: it is a fibration equipped with fibered terminal objects. 
\end{proposition}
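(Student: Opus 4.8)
The plan is to exhibit explicitly the point $\top_{\bullet}$ witnessing that $\slice P p$ is terminally pointed and to check that it is a fibered right adjoint to the structural functor $\slice P p:\slice P p\arr_{\cateb B}1_{\cateb B}$, the latter being a fibered functor since $1_{\cateb B}$ is terminal in ${\bf Fib}(\cateb B)$. By the remark identifying terminally pointed fibrations with fibrations carrying fibered terminal objects, it is equivalent, and more transparent, to produce a fibered terminal object for $\slice P p$. The guiding observation is the fibre description $(\slice P p)_I\equiv\slice{P_I}{pI}$: by item~(vi) of Proposition~\ref{fewprop}, the slice $\slice{P_I}{pI}$ carries the canonical terminal object $1_{pI}$, which inside \cateb Y is the object $(I,1_{pI}:pI\arr pI)$. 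This fixes the candidate point.

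First I would define $\top_{\bullet}:\cateb B\arr\cateb Y$ by $I\mapsto(I,1_{pI})$ on objects and $u\mapsto(u,pu)$ on morphisms; the square~\eqref{codcart} required for $(u,pu)$ to be a morphism of \cateb Y commutes because both vertical legs are identities and the two horizontal arrows coincide with $pu$. Functoriality and the identity $\slice P p\circ\top_{\bullet}=1_{\cateb B}$ are immediate, so $\top_{\bullet}$ is a section of the structural functor. To see that $\top_{\bullet}$ is fibered I would check that the image $(u,pu)$ of an arbitrary morphism $u$, which is $1_{\cateb B}$-cartesian by Example~\ref{identity}, is $\slice P p$-cartesian, that is, that the square with identity vertical legs and both horizontal arrows equal to $pu$ is a pullback in \cateb X; this is automatic. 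Equivalently, reindexing $(J,1_{pJ})$ along $u$ returns $(I,1_{pI})$, since a chosen pullback of an identity along $pu$ is again an identity, which is exactly the stability under reindexing that promotes the fibrewise terminal objects to a fibered terminal object.

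It then remains to produce the fibered adjunction $\slice P p\dashv\top_{\bullet}$. The counit $\slice P p\circ\top_{\bullet}\Arr 1_{\cateb B}$ is the identity, since $\top_{\bullet}$ is a section, hence trivially $1_{\cateb B}$-vertical; the unit at an object $(I,x:X\arr pI)$ is the morphism $(1_I,x):(I,x)\arr(I,1_{pI})$, the unique arrow into the fibrewise terminal object, and it is $\slice P p$-vertical. The triangle identities reduce to identities in \cateb B and in each fibre, so the adjunction is fibered and $(\top_{\bullet},\slice P p)$ is terminally pointed. The sole point demanding attention, and the only candidate for an obstacle, is confirming that the universal property of $1_{pI}$ in $\slice{P_I}{pI}$ survives $\slice P p$-reindexing; but once the description of $\slice P p$-cartesian morphisms as pullback squares of the shape~\eqref{codcart} is unwound, this collapses to the triviality that pullbacks of identities are identities, so the whole verification is bookkeeping rather than a genuine difficulty.
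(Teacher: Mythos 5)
Your proof is correct and takes essentially the same approach as the paper: the paper's (one-sentence) proof exhibits exactly the same point, $I\mapsto(I,1_{pI})$, written there as $1_{p\bullet}:1_{\cateb B}\arr_{\cateb B}\slice P p$, and asserts it is right adjoint right inverse to $\slice P p$. Your write-up merely spells out the verifications the paper leaves implicit (that $(u,pu)$ is $\slice P p$-cartesian since a square with identity vertical legs is a pullback, that the unit is $(1_I,x)$ and the counit the identity, and the triangle identities), all of which are accurate.
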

\begin{proof}
For every object $I$ of \cateb B, the assignment $I\mapsto (I,1_{pI})$ extends to a fibered functor $1_{p\bullet}:1_{\cateb B}\arr_{\cateb B}\slice P p$ which is right adjoint right inverse to $\slice P p$.
\end{proof}

\begin{examples}
\indent
\begin{enumerate}[i)]
\item For a terminally pointed fibration $(\top_{\bullet}, \fib XBP)$ with $P$ equipped with fibered pullbacks, it is well known that $P$ can be recovered by change of base, as follows:
$$
\xymatrix{\cateb X\ar[d]_P\ar[r]&V(P)\ar[d]^{\bf{cod}_P}\\
\cateb B\ar[r]_{\top_{\bullet}}&\cateb X}
$$ 
This fact is the counterpart of point \eqref{i} of proposition~\ref{fewprop}; explicitly, here we have $\slice P \top_{\bullet}\equiv P$.
\item For \cateb C a category and $A$ any of its objects, the pointed category $(A,\cateb C)$ is nothing but the pointed fibration $(A:\cateb 1\arr\cateb C,\cateb C:\cateb C\arr\cateb 1)$ where the fibration \cateb C is equipped with fibered pullbacks if and only if the ordinary category \cateb C is equipped with pullbacks if and only if ${\bf cod}_{\cateb C}:V(\cateb C)\arr\cateb C$ is a fibration, where $V(\cateb C)\equiv\cateb{C}^{\arr}$. Thus, as a special case of the construction introduced in definition~\ref{fundef}, the ordinary slice category \slice{\cateb C}{A} can be recovered as a slice fibration by change of base, as follows:
$$
\xymatrix{\slice{\cateb C}{A}\ar[d]_{\slice{\cateb C}{A}}\ar[r]&\cateb{C}^{\arr}\ar[d]^{{\bf cod}_{\cateb C}}\\
\cateb 1\ar[r]_A&\cateb C}
$$
\item For \cateb C a category, the diagonal functor $\Delta_{\cateb C}=<1_{\cateb C},1_{\cateb C}>:\cateb C\arr\cateb C\times\cateb C$ is a point of the projection fibration $\pi_1:\cateb C\times\cateb C\arr\cateb C$, so that $(\Delta_{\cateb C},\pi_1)$ is a pointed fibration; moreover, $\pi_1$ is equipped with fibered pullbacks if and only if \cateb C is equipped with pullbacks. If this is the case, the fundamentale fibration ${\bf cod}_{\cateb C}:\cateb{C}^{\arr}\arr\cateb C$ arises by change of base, as follows:
$$
\xymatrix{\cateb{C}^{\arr}\ar[d]_{{\bf cod}_{\cateb C}}\ar[r]&V(\pi_1)\ar[d]^{{\bf cod}_{\pi_1}}\\
\cateb C\ar[r]_{\Delta_{\cateb C}}&\cateb C\times\cateb C}
$$
that is ${\bf cod}_{\cateb C}\equiv\slice{\pi_1}{\Delta_{\cateb C}}$.
\end{enumerate}
\end{examples}

\subsection{Fibered functors between slice fibrations}
A morphism of pointed fibrations $(F,\alpha):(p,P)\arr(q,P)$ with $F$ the identity, is completely identified by a fibered natural transformation $\alpha:p\Arr q$; if this is the case and if confusion is not likely to arise, we will just write $\alpha:(p,P)\arr(q,P)$.

\begin{proposition}\label{prima}
For every morphism of pointed fibrations $\alpha:(p,P)\arr(q,P)$ the following facts hold:
\begin{enumerate}[(i)]
\item for every object $(I, x:X\arr qI)$ in the toal category of $\slice P q$, the pullback diagram 
$$
\xymatrix{X\ar[d]_x&X'\ar[l]\ar[d]^{x'}\\
qI&pI\ar[l]^{\alpha_I}}
$$
identifies the assignment 
$$
(I, x:X\arr qI)\mapsto (I, x':X'\arr qI)
$$
which extends to a fibered functor $\alpha^{\ast}:\slice{P}{q}\arr_{\cateb B}\slice{P}{p}$.
\item\label{II} for every object $(I, x:X\arr pI)$ in the total category of $\slice P p$, the assignment
$$
(I, x:X\arr pI)\mapsto(I, \alpha_I\circ x:X\arr qI)
$$
extends to a fibered functor $\Sigma_{\alpha}:\slice P p\arr\slice P q$.
\item There is a fibered adjunction 
$$
\xymatrix{\slice P p \ar@{}[rr]|{\perp}\ar@/^2ex/[rr]^{\Sigma_{\alpha}}&&\slice P q\ar@/^2ex/[ll]^{\alpha^{\ast}}}
$$
in ${\bf Fib}(\cateb B)$
\end{enumerate}
\end{proposition}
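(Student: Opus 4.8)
The plan is to construct $\alpha^{\ast}$ and $\Sigma_{\alpha}$ explicitly and then to read off the unit and counit of the adjunction from the very pullbacks defining $\alpha^{\ast}$. Throughout I use that a $\slice P p$-cartesian morphism over $u\colon I\arr J$ is precisely a square of the shape~\eqref{codcart} that is a pullback in \cateb X, and that for every object $I$ of \cateb B the component $\alpha_I\colon pI\arr qI$ is $P$-vertical over $I$, so that each pullback taken below lives inside the fibre $P_I$ and exists because $P$ carries fibered pullbacks (Example~\ref{six}).

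For~(i), I define $\alpha^{\ast}$ on objects by the displayed pullback, and on a morphism $(u,f)\colon(I,x)\arr(J,y)$ of $\slice P q$ by $\alpha^{\ast}(u,f)=(u,f')$, where $f'\colon X'\arr Y'$ is the map induced into the pullback $Y'$ by the cone $\bigl(f\circ\pi_X,\,pu\circ x'\bigr)$, with $\pi_X\colon X'\arr X$ the projection. That this cone commutes over $qJ$ is exactly the commutativity of~\eqref{codcart} combined with the naturality identity $qu\circ\alpha_I=\alpha_J\circ pu$. Functoriality of $\alpha^{\ast}$, and the fact that $\alpha^{\ast}(u,f)$ is again a morphism of $\slice P p$, follow from the uniqueness clause of the universal property.

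The substantive point, and the one I expect to be the main obstacle, is that $\alpha^{\ast}$ is fibered. Fix a $\slice P q$-cartesian $(u,f)$, so that its square~\eqref{codcart} is a pullback, and assemble the cube whose left and right faces are the defining pullbacks of $X'$ and $Y'$, whose back face is this pullback, and whose front face is the square of $\alpha^{\ast}(u,f)$. Pasting the left and back faces shows that the rectangle $X'\arr X\arr Y$ over $pI\arr qI\arr qJ$ is a pullback; using the naturality of $\alpha$ once more to identify lower edges, this rectangle coincides with the rectangle $X'\arr Y'\arr Y$ over $pI\arr pJ\arr qJ$ obtained from the front and right faces. Since the right face is a pullback, the cancellation half of the pasting lemma forces the front face to be a pullback, i.e.\ $\alpha^{\ast}(u,f)$ is $\slice P p$-cartesian. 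For~(ii), $\Sigma_{\alpha}$ post-composes objects with $\alpha$ and leaves the underlying $f$ untouched; the target square commutes by naturality of $\alpha$, and $\Sigma_{\alpha}$ is fibered because the square of $\Sigma_{\alpha}(u,f)$ is the vertical composite of the (pullback) square of $(u,f)$ on top of the naturality square of $\alpha$ at $u$, the latter being a pullback by the remark following Definition~\ref{ptdmorphism}; one application of the pasting lemma finishes it.

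For~(iii), I exhibit unit and counit directly. For $(I,x)$ in the total category of $\slice P p$ the unit $\eta_{(I,x)}\colon(I,x)\arr\alpha^{\ast}\Sigma_{\alpha}(I,x)$ is the vertical map $\langle 1_X,x\rangle$ into the pullback of $\alpha_I x$ along $\alpha_I$; for $(I,y)$ in the total category of $\slice P q$ the counit $\veps_{(I,y)}\colon\Sigma_{\alpha}\alpha^{\ast}(I,y)\arr(I,y)$ is the projection $\pi_Y\colon Y'\arr Y$. Both are $P$-vertical over $I$, hence $\slice P p$- respectively $\slice P q$-vertical, and the two triangle identities reduce to equalities of maps into pullbacks that are settled by the universal property; naturality of $\eta$ and $\veps$ on arbitrary morphisms follows likewise, using the stability of the relevant pullbacks under $P$-reindexing for the cartesian case. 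As $\Sigma_{\alpha}$ and $\alpha^{\ast}$ are fibered and the unit and counit vertical, this is a fibered adjunction in ${\bf Fib}(\cateb B)$. As a sanity check, in each fibre the construction restricts to the classical slice adjunction $\Sigma_{\alpha_I}\dashv\alpha_I^{\ast}$ between $\slice{P_I}{pI}$ and $\slice{P_I}{qI}$ provided by Proposition~\ref{fewprop}, since $P_I$ has pullbacks.
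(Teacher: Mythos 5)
Your proposal is correct and takes essentially the same route as the paper: the paper likewise gets cartesianness of $\alpha^{\ast}$-images by pullback pasting, and fiberedness of $\Sigma_{\alpha}$ from the fact that the naturality square of $\alpha$ at $u$ is a pullback thanks to the $P$-cartesianness of $pu$ and $qu$. The paper dismisses the remaining points (the object/morphism assignments in (i) and the adjunction in (iii)) as straightforward, so your explicit unit $\langle 1_X,x\rangle$ and counit $\pi_Y$ merely fill in details it omits.
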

\begin{proof}
\begin{enumerate}[(i)]
\item Straightforward; in particular, $\alpha^{\ast}$ maps $\slice P q$-cartesian morphisms to $\slice P p$-cartesian morphisms by pullback pasting because a $\slice P q$-cartesian morphism such as 
$$
(u,f):(I,x:X\arr qI)\arr(J,y:Y\arr qJ)
$$ 
essentially amounts to the pullback diagram 
$$
\xymatrix{X\ar[d]_x\ar[rr]^f&&Y\ar[d]^y\\
qI\ar[rr]_{qu}&&qJ}
$$
\item For every morphism $(u,f):(I,x:X\arr pI)\arr(J,y:Y\arr qJ)$ in the total category of $\slice P p$, $\Sigma_{\alpha}(u,f)$ is 
$$
(u,f):(I,\alpha_I\circ x:X\arr qI)\arr(J,\alpha_J\circ y:Y\arr qJ)
$$
in accordance with the commutative diagram
$$
\xymatrix{Y\ar[d]_y&X\ar[l]_f\ar[d]^x\\
pJ\ar[d]_{\alpha_J}&pI\ar[l]_{pu}\ar[d]^{\alpha_I}\\
qJ&qI\ar[l]^{qu}}
$$ 
and $\Sigma_{\alpha}$ preserves $\slice P p$ cartesian morphisms because the lower square in the previous diagram is a pullback, thanks to the $P$-cartesianness of $pu$ and $qu$. 
\item Straightforward.
\end{enumerate}
\end{proof}

\begin{remark}
In parallel with point~\eqref{iv} of proposition~\ref{fewprop} and remark~\ref{riferimento}, we observe that for every pointed fibration $(p,P)$ there exists an abvious fibered forgetful functor $\Sigma_p:\slice P p\arr_{\cateb B}P$; this is because in a $\slice P p$-cartesian morphism such as~\eqref{codcart}, the morphism $f$ turns out to be  $P$-cartesian. Moreover, if $P$ as a pointed fibration $(p, P)$ is also terminally pointed as $(\top_{\bullet},P)$, then the forgetful fibered functor $\Sigma_p$ really amounts to $\Sigma_p:\slice P p\arr_{\cateb B}\slice{P}{\top_{\bullet}}\equiv P$ in accordance with point~\eqref{II} of proposition~\ref{prima}.
\end{remark}

\subsection{Slicing fibrations as a universal construction}
In parallel with proposition~\ref{firstobs} we provide
\begin{proposition}
There is an adjoint situation
\begin{eqnarray}\label{fiberwise}
\xymatrix{{\bf TPtdFib}(\cateb B)\ar@{^(->}[rr]_{\iota}^{\top}&&{\bf PtdFib}(\cateb B)\ar@/_1pc/[ll]_G}
\end{eqnarray}
\end{proposition}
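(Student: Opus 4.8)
The plan is to mirror the proof of Proposition~\ref{firstobs} one fibered level up. On objects the functor $G$ sends a pointed fibration $(p,\fib XBP)$ to its slice fibration equipped with the canonical fibered terminal point, that is $G(p,P)=(1_{p\bullet},\slice P p)$, which indeed lands in ${\bf TPtdFib}(\cateb B)$ because a slice fibration is terminally pointed. Rather than spell out $G$ on morphisms, I would exhibit for each $(p,P)$ a universal morphism from $\iota$ to $(p,P)$, since this simultaneously yields the right adjoint $G$ and the counit of the adjunction. The candidate universal arrow is the forgetful morphism of pointed fibrations
$$
(\Sigma_p,1_p):(1_{p\bullet},\slice P p)\arr(p,P),
$$
where $\Sigma_p:\slice P p\arr_{\cateb B}P$ is the fibered forgetful functor recorded in the remark after Proposition~\ref{prima}; this is well posed because $\Sigma_p\circ 1_{p\bullet}=p$, so $1_p$ is an admissible second component.

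To check universality I would take an arbitrary terminally pointed fibration $(\top_\bullet,\fib ZBQ)$ together with a morphism $(F,\alpha):(\top_\bullet,Q)\arr(p,P)$, where $F:Q\arr_{\cateb B}P$ is fibered and $\alpha:F\circ\top_\bullet\Arr p$ has $P$-vertical components. The induced fibered functor $\overline F:Q\arr_{\cateb B}\slice P p$ is forced by the forgetful factorization: for an object $Z$ over $I=QZ$ let $!_Z:Z\arr\top_\bullet I$ be the unique $Q$-vertical morphism into the fibered terminal object, and set
$$
\overline F(Z)=\bigl(I,\ \alpha_I\circ F(!_Z):FZ\arr pI\bigr),
$$
with the evident action on morphisms. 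This is the fibered counterpart of the composite $\overline F=\Sigma_\alpha\circ\slice F 1$ appearing in Proposition~\ref{firstobs}, with $\Sigma_\alpha$ now supplied by Proposition~\ref{prima}. Since the target is terminally pointed, the second component $\overline\alpha:\overline F\circ\top_\bullet\Arr 1_{p\bullet}$ is uniquely determined as the family of canonical maps into the fibered terminal object; one records that $\overline F(\top_\bullet I)=(I,\alpha_I)$, whence $\overline\alpha_I$ is just $\alpha_I$ read as a morphism into $1_{pI}$ in the fibre $\slice{P_I}{pI}$.

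The hard part will be verifying that $\overline F$ is genuinely fibered, i.e. that it carries $Q$-cartesian morphisms to $\slice P p$-cartesian ones. Given a $Q$-cartesian $\phi:Z\arr Z'$ over $u:I\arr J$, I would stack the two naturality squares
$$
\xymatrix{
FZ\ar[d]_{F(!_Z)}\ar[r]^{F\phi}&FZ'\ar[d]^{F(!_{Z'})}\\
F\top_\bullet I\ar[d]_{\alpha_I}\ar[r]^{F(\top_\bullet u)}&F\top_\bullet J\ar[d]^{\alpha_J}\\
pI\ar[r]_{pu}&pJ
}
$$
and argue by pullback pasting. The lower square is a pullback by the remark following Definition~\ref{ptdmorphism} applied to $(F,\alpha)$, whereas the upper square has its two horizontal edges $F\phi$ and $F(\top_\bullet u)$ $P$-cartesian over $u$---because $\phi$ and $\top_\bullet u$ are $Q$-cartesian and $F$ is fibered---joined by the $P$-vertical maps $F(!_Z)$ and $F(!_{Z'})$; a short verification with the cartesian universal property shows that such a square is automatically a pullback. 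Hence the outer rectangle, whose vertical composites are exactly the structure maps of $\overline F(Z)$ and $\overline F(Z')$, is a pullback, which is precisely the assertion that $\overline F(\phi)$ is $\slice P p$-cartesian.

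It then remains to confirm the triangle identity and the uniqueness clause. Using the composition law of Definition~\ref{ptdmorphism}, the composite $(\Sigma_p,1_p)\circ(\overline F,\overline\alpha)$ has first component $\Sigma_p\circ\overline F=F$, since $\Sigma_p$ discards the structure map and returns $FZ$, and second component $1_p\circ\Sigma_p\overline\alpha=\alpha$, so the required factorization holds. For uniqueness, any competitor $(\overline F',\overline\alpha')$ lying over $(F,\alpha)$ must satisfy $\Sigma_p\circ\overline F'=F$, so $\overline F'(Z)$ has underlying object $FZ$; evaluating $\overline F'$ at $\top_\bullet I$ and at $!_Z$ and reading the resulting slice morphisms back through $\Sigma_p$ forces the structure map to be $\alpha_I\circ F(!_Z)$, so $\overline F'=\overline F$, while $\overline\alpha'=\overline\alpha$ is automatic from terminality. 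This furnishes the universal arrow and hence the adjoint situation~\eqref{fiberwise}.
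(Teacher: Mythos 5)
Your proposal is correct and follows exactly the paper's route: the paper's proof likewise takes $G(p,P)=(1_{p\bullet},\slice P p)$ and asserts that $(\Sigma_p,1_p):(1_{p\bullet},\slice P p)\arr(p,P)$ is universal from $\iota$ to $(p,P)$, in parallel with Proposition~\ref{firstobs}. The paper leaves the verification implicit, whereas you supply the details (the formula $\overline F(Z)=(I,\alpha_I\circ F(!_Z))$, cartesianness via pullback pasting using the remark after Definition~\ref{ptdmorphism}, and uniqueness via $\Sigma_p$ and fibered terminality), all of which check out.
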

\begin{proof}
For every pointed fibration $(q,Q)$, the assignment $(q,Q)\mapsto (1_{q\bullet},\slice Q q)$ extends to a functor $G:{\bf Ptd}(\cateb B)\arr{\bf TPtdFib}(\cateb B)$ which is right adjoint to the inclusion functor $\iota$, because for every pointed fibration $(q, Q)$ the morphism 
$$
(\Sigma_q,1_q):(1_{q\bullet},\slice Q q)\arr(q, Q)
$$ 
is universal from $\iota$ to $(q,Q)$. 
\end{proof}

For any pair of pointed fibrations over possibly different base categories, say $(p, \fib XBP)$ and $(q,\fib YCQ)$,  a morphism from the first to the second is a pair $((H,K),\alpha)$ with $(H,K)$ a morphism in {\bf Fib} from $P$ to $Q$ and $\alpha:H\circ p\Arr q\circ K$ with $Q$-vertical components. Pointed fibrations and the morphisms just described between them form a category {\bf PtdFib}. Analogously, the terminally pointed fibrations form a full subcategory {\bf TPtdFib} of {\bf PtdFib}.  

\begin{proposition}
The adjoint situation~\eqref{fiberwise} extends to a fibered adjunction
$$
\xymatrix{{\bf TPtdFib}\ar[dr]\ar@{^(->}[rr]_{\iota}^{\top}&&{\bf PtdFib}\ar@/_1pc/[ll]_G\ar[dl]\\
&{\bf Cat}
}
$$
\end{proposition}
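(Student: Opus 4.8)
The plan is to realize the two functors to ${\bf Cat}$ appearing in the triangle as fibrations and then to promote the fiberwise adjunction of the preceding proposition to a fibered one by checking a Beck--Chevalley condition. Both functors send a pointed fibration over \cateb B to its base category \cateb B and a morphism $((H,K),\alpha)$ to the base component $K$; write $\Pi:{\bf PtdFib}\arr{\bf Cat}$ for the projection and $\Pi_0$ for its restriction to ${\bf TPtdFib}$. The fiber of $\Pi$ over \cateb B is exactly ${\bf PtdFib}(\cateb B)$, and reindexing along a functor $K:\cateb B\arr\cateb C$ is change of base of pointed fibrations; likewise for $\Pi_0$. The preceding proposition supplies, in each fiber, the fiberwise adjunction $\iota_{\cateb B}\dashv G_{\cateb B}$ with $G_{\cateb B}$ the slicing functor.

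First I would check that $\Pi$ and $\Pi_0$ are fibrations. This is the familiar fibration of ${\bf Fib}$ over ${\bf Cat}$ given by the base-category assignment, enriched with the point: given $(q,\fib YCQ)$ and $K:\cateb B\arr\cateb C$, the stability of pointed fibrations under change of base produces the reindexed pointed fibration $(q_K,Q_K)$ over \cateb B together with the canonical morphism $((\ov K,K),1):(q_K,Q_K)\arr(q,Q)$, whose $\Pi$-cartesianness is exactly the universal property of the defining pullback squares. Restricting to terminally pointed fibrations, which are likewise stable under change of base, shows $\Pi_0$ is a fibration and that $\iota$ is a fibered functor commuting strictly with reindexing.

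The heart of the proof is to verify that slicing commutes with change of base, i.e.\ the Beck--Chevalley isomorphism $\slice{Q_K}{q_K}\cong(\slice Q q)_K$ for every $K:\cateb B\arr\cateb C$. Here I would compute both sides fiberwise: by the remark on fibers of slice fibrations the fiber of $\slice{Q_K}{q_K}$ over $I$ in \cateb B is $\slice{(Q_K)_I}{q_KI}$, and since $(Q_K)_I\equiv Q_{KI}$ with $q_KI$ corresponding under this identification to $qKI$, this is $\slice{Q_{KI}}{qKI}$, which is at once the fiber of $\slice Q q$ over $KI$, hence the fiber of $(\slice Q q)_K$ over $I$. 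These identifications are natural and respect the cartesian morphisms because change of base is computed fiberwise and the pullbacks defining the slice fibration are preserved; thus the slicing functors are stable under reindexing and, glued along these isomorphisms, the fiberwise slicings assemble into a fibered functor $G:{\bf PtdFib}\arr_{{\bf Cat}}{\bf TPtdFib}$.

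Finally, the fiberwise units and counits of $\iota_{\cateb B}\dashv G_{\cateb B}$ lie over identity functors of base categories, hence are vertical for $\Pi_0$ and $\Pi$, and the Beck--Chevalley isomorphisms above make them compatible with reindexing. By the standard criterion --- a fibered functor admitting fiberwise adjoints stable under reindexing admits a fibered adjoint --- this assembles the fiberwise data into the desired fibered adjunction $\iota\dashv G$ over ${\bf Cat}$. The main obstacle is the Beck--Chevalley step of the third paragraph: both change of base and slicing amount to forming pullbacks, in the base and in the fibers respectively, and the compatibility rests on interchanging these independent pullbacks; I expect only the bookkeeping of the iterated change-of-base squares to be delicate, with no conceptual difficulty beyond it.
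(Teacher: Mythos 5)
Your proof is correct, and since the paper's own proof consists of the single word ``Straightforward,'' your argument supplies exactly the details the author leaves implicit: the base-category projections are fibrations by the paper's stability-under-change-of-base proposition, the Beck--Chevalley comparison $\slice{Q_K}{q_K}\cong(\slice{Q}{q})_K$ holds canonically by pasting the defining pullback squares (both sides are the pullback of ${\bf cod}_Q$ along $q\circ K$), and the fiberwise adjunctions with vertical unit and counit then assemble by the standard fiberwise-to-fibered criterion. This is essentially the intended (unwritten) route, so there is nothing to contrast with the paper's approach.
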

\begin{proof}
Straightforward.
\end{proof}

\bibliographystyle{plain}
\bibliography{mybib}

\end{document}